\newtheorem{thm}{Theorem}
\newtheorem{cor}[thm]{Corollary}
\newtheorem{lem}[thm]{Lemma}
\newtheorem{prop}[thm]{Proposition}
\newtheorem{conj}[thm]{Conjecture}
\newtheorem{iden}[thm]{Identity}
\def\CT{\mathop{\mathrm{CT}}}
\def\QQ{\mathbb{Q}}
\def\NN{\mathbb{N}}
\def\PP{\mathbb{P}}
\title{On Zeilberger's Constant Term \\
for Andrews' TSSCPP Theorem}
\author{Guoce Xin\thanks{The author would like to thank Doron Zeilberger for suggesting this subject, and thank the referee for valuable suggestions improving this exposition. Part of this work was done during the author's stay at the Center for Combinatorics, Nankai University. This work was supported by the Natural Science Foundation of China.}\\
\small Department of Mathematics\\[-0.8ex]
\small Capital Normal University, Beijing 100048, PR China\\
\small \texttt{guoce.xin@gmail.com}\\
}
\date{June 14, 2011\\
\small Mathematics Subject Classifications: 05A15, 05A19}
\begin{document}
\maketitle

\begin{flushright}
  Dedicated to Doron Zeilberger's 60th birthday
\end{flushright}

\vspace{5mm}

\begin{abstract}
This paper studies Zeilberger's two prized constant term identities. For one of the identities, Zeilberger asked for a simple proof that may give rise to a simple proof of Andrews theorem for  the number of totally symmetric self complementary plane partitions. We obtain an identity reducing a constant term in $2k$ variables to a constant term in $k$ variables. As applications,  Zeilberger's constant terms are converted to single determinants. The result extends for two classes of matrices, the sum of all of whose full rank minors is converted to a single determinant. One of the prized constant term problems is solved, and we give a seemingly new approach to Macdonald's constant term for root system of type BC.
\end{abstract}

%

\section{Introduction}

In 1986 \cite{MRR}, Mills, Robbins and Rumsey defined a class of objects called \emph{totally symmetric self complementary plane partitions} (denoted TSSCPP for short) and conjectured that the number $t_n$ of TSSCPPs of order $n$ is given by
\begin{align}
  \label{e-tn}
t_n= A_n:=\prod_{i=0}^{n-1} \frac{(3i+1)!}{(n+i)!},
\end{align}
which also counts the number of alternating sign matrices, a famous combinatorial structure, of order $n$.
In 1994, Andrews \cite{Andrews} proved the conjecture by using Stembridge's Pfaffian representation \cite{Stem} derived from Doran's combinatorial characterization \cite{Doron} of $t_n$. At the same time, Zeilberger suggested a constant term approach in \cite{zeilberger-dollars}, as we describe below.

We only need Doran's description of $t_n$ in \cite{Doron}:  $t_n$ equals the sum of all the $n\times n$ minors of the $n\times (2n-1)$ matrix $\left( \displaystyle\binom{i-1}{j-i} \right)_{1\le i \le n, 1\le j \le 2n-1}$. The sum can be transformed to a constant term
by simple algebra manipulation. Thus, combining equation \eqref{e-tn}, we can obtain the following  identity:
\begin{iden}\label{iden-And}
$$\CT_x \frac{\prod_{1\le i< j \le n}
(1-\frac{x_i}{x_j})\prod_{i=1}^n(1+x_i^{-1})^{i-1}}{\prod_{i=1}^n
(1-x_i) \prod_{1\le i<j\le n} (1-x_ix_j) } =\prod_{i=0}^{n-1}
\frac{(3i+1)!}{(n+i)!}.
$$\end{iden}
Zeilberger observed that a simple proof of this identity will give rise to a simple proof of Andrews' TSSCPP theorem. He offered a prize asking for a direct constant term proof. A prize is also offered for the following identity.
\begin{iden}\label{iden-simple}
$$\frac{1}{n!}\CT_x \frac{\prod_{1\le i\ne j \le n}
(1-\frac{x_i}{x_j})\prod_{i=1}^n(1+x_i^{-1})^m}{\prod_{i=1}^n
(1-x_i) \prod_{1\le i<j\le n} (1-x_ix_j) } =\prod_{j=0}^{n-1}
\prod_{i=1}^m \frac{2i+j}{i+j}.
$$\end{iden}

In 2007, I had a chance to meet Doron Zeilberger and to discuss the advantage of using partial fraction decomposition and the theory of iterated Laurent series in dealing with the $q$-Dyson related problems. See, e.g., \cite{qDyson1,qDyson2}. Thereafter
he suggested that I shall consider the above two identities. In this paper, only Identity \ref{iden-simple} is given a direct constant term proof. In addition, a conjecture is given as a generalization of Identity \ref{iden-And}.

The paper is organized as follows. Section 1 is this introduction. Section 2 includes the main results of this paper. By using partial fraction decomposition, we derive a constant term reduction identity that reduces a constant term in $2k$ variables to a constant term in $k$ variables. Applications are given in Section 3. For two classes of matrices, the sum of all full rank minors are converted to a single determinant. We also make a conjecture generalizing  Identitie \ref{iden-And}. Section 4 completes the proof of Identity \ref{iden-simple}. We also include a method to evaluate Macdonald's constant term for root system of type BC.

\section{Constant term reduction identities}
In this paper, we only need to work in the ring of Laurent series
$\QQ((x_1,x_2,\dots,x_n))$. For $\pi \in \mathfrak{S}_n$ we use the usual notation
$\pi f(x_1,x_2,\dots,x_n):=f(x_{\pi_1},x_{\pi_2},\dots,x_{\pi_n})$. The easy but useful SS-trick (short for Stanton-Stembridge trick) states that if $f\in \QQ((x_1,x_2,\dots,x_n))$, then
$$ \CT_x f(x_1,x_2,\dots,x_n) = \frac{1}{n!} \CT_x \sum_{\pi\in \mathfrak{S}_n} \pi f(x_1,x_2,\dots,x_n).$$
See, e.g., \cite[p. 9]{SS-trick}. We will often use the SS-trick without mentioning.

\def\sgn{\textrm{sgn}}

We need some notations. Define
\begin{align}
\label{e-Bd} B_k(x)&:=\det \left(x_i^{-j}-x_i^{j}
\right)_{1\le i,j \le k}=\sum_{\pi\in \mathfrak{S}_k} \sgn(\pi) \pi(x_1^{-1}-x_1)\cdots (x_k^{-k}-x_k^k),\\
\bar{B}_k(x)&:=\det \left(x_i^{j-1}+x_i^{-j} \right)_{1\le i,j \le
k}=\sum_{\pi\in \mathfrak{S}_k} \sgn(\pi) \pi(1+x_1^{-1})\cdots
(x_k^{k-1}+x_k^{-k}).\label{e-Bbd}
\end{align}
Then it is well-known that
\begin{align} B_k &=\prod_{1\le i\le k}\frac{1-x_i^2}{x_i^k}\prod_{1\le i<j\le
k}({x_i}-{x_j})(1-x_ix_j),\label{e-Bp}\\
\bar{B}_k &=\prod_{1\le i\le k}\frac{1+x_i}{x_i^k}\prod_{1\le i<j\le
k}({x_i}-{x_j})(1-x_ix_j).\label{e-Bbp}
\end{align}

A rational function $Q$ is said to be \emph{gratifying} in
$x_1,x_2,\dots,x_n$ if we can write
\begin{align}
\label{e-Q}Q=Q(x_1,\dots,x_n)= \frac{\prod_{1\le i\ne j \le n}
(1-\frac{x_i}{x_j})P(x_1^{-1},\dots, x_n^{-1})}{\prod_{i=1}^n
(1-x_i) \prod_{1\le i<j\le n} (1-x_ix_j) },
\end{align}  where
$P(x_1,\dots,x_n)$ is a polynomial.

Now we can state our main result as the following. The proof will be given later.
\begin{thm}\label{t-P}
Let $Q$ be as in \eqref{e-Q} with $P$ a symmetric polynomial. If
$n=2k$, then
\begin{align}
\CT_x  Q(x)&=\frac{(2k)!}{2^k}\CT_x
P(x_1,\dots,x_k,x_{1}^{-1},\dots,x_k^{-1})
\bar{B}_k(x)\prod_{i=1}^k(x_i^{i}+x_i^{1-i}) \label{e-Qeven}\\
&=(2k-1)!!\CT_x P(x_1,\dots,x_k,x_{1}^{-1},\dots,x_k^{-1})
\bar{B}_k(x)^2\prod_{i=1}^k x_i;\tag{\ref{e-Qeven}$'$}
\label{e-Qeven'}
\end{align}
if $n=2k+1$, then
\begin{align}\label{e-Qodd}
\CT_x  Q(x)&=\frac{(2k+1)!}{(-2)^k}\CT_x
P(x_1,\dots,x_k,x_{1}^{-1},\dots,x_k^{-1},1) B_k(x)\prod_{i=1}^k
(x_i^{-i}-x_i^i) \\
&=(-1)^k(2k+1)!!\CT_x P(x_1,\dots,x_k,x_{1}^{-1},\dots,x_k^{-1},1)
B_k(x)^2 \tag{\ref{e-Qodd}$'$}\label{e-Qodd'}.
\end{align}
\end{thm}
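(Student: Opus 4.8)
The plan is to evaluate $\CT_x Q$ as an iterated constant term in the field $\QQ((x_1,\dots,x_n))$, splitting the variables into $X=(x_1,\dots,x_k)$ and the complementary block $Y$, and eliminating the $Y$-variables first by partial fraction decomposition. The guiding observation is that, in \eqref{e-Q}, the only denominator factors coupling a complementary variable $y$ to the rest are $(1-y)$ and the cross factors $(1-x_iy)$; in the chosen iterated Laurent series the contributing poles are $y=x_i^{-1}$, and the residue there substitutes $y\mapsto x_i^{-1}$. Since $P$ is symmetric, after such a substitution its $y$-argument $y^{-1}$ becomes $x_i$, so every perfect matching of the $Y$-variables against $X$ produces the \emph{same} evaluation $P(x_1,\dots,x_k,x_1^{-1},\dots,x_k^{-1})$. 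In the odd case one leftover variable has no partner and is instead sent to the pole $y=1$ of $(1-y)$, which explains both the extra argument and the evaluation $P(\dots,1)$ in \eqref{e-Qodd}.

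The heart of the argument is to show that the signed sum of these residues, taken over all ways of matching $Y$ against $X$, assembles into the determinant $\bar B_k$ of \eqref{e-Bbd} (resp.\ $B_k$ of \eqref{e-Bd}) times the diagonal factor $\prod_{i=1}^k(x_i^i+x_i^{1-i})$ (resp.\ $\prod_{i=1}^k(x_i^{-i}-x_i^i)$). Concretely, I would expand the Vandermonde-type numerator $\prod_{i\ne j}(1-x_i/x_j)$ and track, for each choice of which cross factor $(1-x_iy)$ supplies each pole, the monomial in the $x_i$ left behind by the residue; the antisymmetry in the matching indices then turns the signed sum into the Leibniz expansion of $\bar B_k$ (resp.\ $B_k$). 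The combinatorial prefactor emerges from counting: there are $(2k)!/2^k=k!\,(2k-1)!!$ ways to organise $2k$ labelled variables into an ordered sequence of $k$ unordered pairs, accounting for the $\tfrac{(2k)!}{2^k}$ in \eqref{e-Qeven}, while the sign $(-1)^k$ in the odd case comes from the minus signs in the entries $x_i^{-j}-x_i^j$ of $B_k$.

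Passing from \eqref{e-Qeven} to \eqref{e-Qeven'} is then a routine symmetrization. Since $\bar B_k$ is antisymmetric under $\mathfrak{S}_k$ while $P(\dots)$ and $\prod_{i}x_i$ are symmetric, and since $\prod_{i=1}^k(x_i^{i-1}+x_i^{-i})$ is exactly the main diagonal term of the determinant \eqref{e-Bbd}, the SS-trick gives $k!\,\CT_x P\,\bar B_k\prod_{i}x_i\prod_{i}(x_i^{i-1}+x_i^{-i})=\CT_x P\,\bar B_k^2\prod_{i}x_i$; combined with $\prod_{i}(x_i^i+x_i^{1-i})=\prod_{i}x_i\prod_{i}(x_i^{i-1}+x_i^{-i})$ and $\tfrac{(2k)!}{2^k}=k!\,(2k-1)!!$ this yields the second form, and the odd case is identical with $B_k$ in place of $\bar B_k$.

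The main obstacle will be the residue bookkeeping of the first step: rigorously justifying the iterated constant term, identifying precisely which poles contribute (those at $y=x_i^{-1}$, together with the single pole $y=1$ in the odd case) while proving that the within-block factors $(1-y_ay_b)$ and, in the even case, the factors $(1-y_b)$ contribute nothing to the net sum, and then verifying that the surviving signed residues reconstitute the determinant with exactly the extra factor $\prod_{i}(x_i^i+x_i^{1-i})$. I expect this to hinge on an inductively applied one-pair reduction lemma, with the orientation of the iterated Laurent series controlling the signs throughout.
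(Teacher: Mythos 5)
Your global architecture --- one\mbox{-}variable partial fraction elimination, bookkeeping by matchings (pairs from poles $x_i=x_r^{-1}$, singletons from $x_i=1$), symmetry of $Q$, and the SS-trick with the correct arithmetic $(2k)!/2^k=k!\,(2k-1)!!$ and $\prod_i(x_i^{i}+x_i^{1-i})=\prod_i x_i\prod_i(x_i^{i-1}+x_i^{-i})$ --- is indeed the paper's, and your last paragraph (passing between \eqref{e-Qeven} and \eqref{e-Qeven'}) is sound. But the step you flag as ``the main obstacle'' is not a technical gap to be filled: it is false. The within-block poles do contribute. When a complementary variable $y_a$ is sent to $y_b^{-1}$, the resulting term is again of the gratifying shape \eqref{e-Q} in the remaining variables (the content of Lemma \ref{l-gratifying}), and since $Q$ is symmetric its eventual constant term \emph{equals} that of a cross-matching term rather than vanishing; there is no sign available for cancellation, because the residue at each pole $1/(1-x_ix_j)$ enters positively. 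Moreover, after within-$Y$ pairings the leftover $x$'s outnumber their partners, so the elimination must continue with within-$X$ pairings $(1-x_ix_j)$, which your scheme does not account for at all. A concrete check: for $n=4$, $k=2$, $P=1$, each of the three perfect matchings of $\{1,2,3,4\}$ contributes constant term $8$ (compute $Q_{M_0}=\bar B_2(x)^2x_1x_2$ via \eqref{e-Bbp}), so $\CT_x Q=24$, consistent with Identity \ref{iden-simple} at $m=0$; keeping only the $k!=2$ cross matchings would give $16$. Indeed, if only bijections $Y\to X$ survived, the prefactor in \eqref{e-Qeven'} would be $k!$ instead of $(2k-1)!!$, contradicting the theorem. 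Your own prefactor count betrays this: ``ordered sequences of $k$ unordered pairs of $2k$ labelled variables'' enumerates \emph{all} perfect matchings times orderings, which is precisely the accounting that presupposes the within-block terms survive --- so the proposal is internally inconsistent. (Your even-case claim about the singleton poles $(1-y_b)$ is the one part that is true, but for the paper's reason: a term acquiring two singletons vanishes because the first substitution $x_i=1$ plants factors $(1-x_j)$ in the numerator, and parity then forbids a single singleton when $n=2k$.)

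The repair is to prove the primed identities first, as the paper does: show the gratifying class is closed under the elimination of Lemma \ref{l-gratifying}, kill the double-singleton terms, and conclude (Proposition \ref{p-Matching}) that $\CT_x Q=\sum_M \CT_x Q_M$ over \emph{all} full matchings with at most one singleton --- $(2k-1)!!$ perfect matchings for $n=2k$, and $(2k+1)!!$ one-singleton matchings for $n=2k+1$. Symmetry of $Q$ equates all these constant terms, and a direct computation of one representative gives $Q_{M_0}=P(x,x^{-1})\,\bar B_k(x)^2\prod_i x_i$, resp.\ $(-1)^kP(x,x^{-1},1)\,B_k(x)^2$, which is \eqref{e-Qeven'}/\eqref{e-Qodd'}; only afterwards does the SS-trick convert $\bar B_k^2$ into $k!\,\bar B_k\prod_i(x_i^{i}+x_i^{1-i})$ to yield \eqref{e-Qeven}. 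Note also that your plan to read the Leibniz expansion of $\bar B_k$ directly off the signed residues cannot work even for the cross matchings: an individual matching term is the full product $\bar B_k(x)\bar B_k(x^{-1})P$, not a single Leibniz monomial --- the determinant-times-diagonal structure of \eqref{e-Qeven} is created by the symmetrization, not by the residue sum.
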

Note that the operator $\CT_x$ is valid since $\CT_{x_i} F = F$ if $F$ is free of $x_i$.
We give the following nice form as a consequence.
\begin{cor}\label{c-p}
Let $p(z)$ be a univariate polynomial in $z$. If $n=2k$ then
\begin{align}
\label{e-c-pe} \frac{1}{n!}\CT_x \frac{\prod_{1\le i\ne j \le n}
(1-\frac{x_i}{x_j})\prod_{i=1}^{n} p(x_i^{-1})}{\prod_{i=1}^{n}
(1-x_i) \prod_{1\le i<j\le n} (1-x_ix_j) } &=\CT_x
\bar{B}_k(x)\prod_{i=1}^k x_i^i p(x_i)p(x_i^{-1});
\end{align}
If $n=2k+1$ then \begin{align} \label{e-c-po} \frac{1}{n!}\CT_x
\frac{\prod_{1\le i\ne j \le n} (1-\frac{x_i}{x_j})\prod_{i=1}^{n}
p(x_i^{-1})}{\prod_{i=1}^{n} (1-x_i) \prod_{1\le i<j\le n}
(1-x_ix_j) } &=p(1)\CT_x {B}_k(x)\prod_{i=1}^k x_i^i
p(x_i)p(x_i^{-1}) .
\end{align}
\end{cor}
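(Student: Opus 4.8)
The plan is to obtain Corollary \ref{c-p} as a direct specialization of Theorem \ref{t-P}. First I would take $P(x_1,\dots,x_n)=\prod_{i=1}^{n}p(x_i)$, which is symmetric as required; then $P(x_1^{-1},\dots,x_n^{-1})=\prod_{i=1}^n p(x_i^{-1})$ reproduces the numerator on the left-hand sides of \eqref{e-c-pe} and \eqref{e-c-po}, while the half-variable specialization evaluates to $\prod_{i=1}^k p(x_i)p(x_i^{-1})$ in the even case and to $p(1)\prod_{i=1}^k p(x_i)p(x_i^{-1})$ in the odd case. Writing $S:=\prod_{i=1}^k p(x_i)p(x_i^{-1})$, which is symmetric and invariant under each substitution $x_i\mapsto x_i^{-1}$, the right-hand sides of \eqref{e-Qeven} and \eqref{e-Qodd} are constant terms of honest Laurent polynomials, so $\CT_x$ is simply the coefficient of $x_1^0\cdots x_k^0$ and is therefore unchanged by any reflection $x_i\mapsto x_i^{-1}$.

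The key computational input is the behaviour of the two determinants under such a reflection. Replacing $x_i$ by $x_i^{-1}$ sends the $(i,j)$ entry $x_i^{j-1}+x_i^{-j}$ of \eqref{e-Bbd} to $x_i^{1-j}+x_i^{j}=x_i(x_i^{j-1}+x_i^{-j})$, i.e. it multiplies the $i$th row by $x_i$, so $\bar{B}_k(x)|_{x_i\mapsto x_i^{-1}}=x_i\,\bar{B}_k(x)$; similarly the $(i,j)$ entry $x_i^{-j}-x_i^{j}$ of \eqref{e-Bd} goes to $-(x_i^{-j}-x_i^{j})$, whence $B_k(x)|_{x_i\mapsto x_i^{-1}}=-B_k(x)$. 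I would record these two one-line identities before the main step.

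For $n=2k$ I would start from \eqref{e-Qeven} and expand $\prod_{i=1}^k(x_i^i+x_i^{1-i})=\sum_{T\subseteq\{1,\dots,k\}}\prod_{i\in T}x_i^i\prod_{i\notin T}x_i^{1-i}$. Fixing $T$ and applying $x_i\mapsto x_i^{-1}$ for every $i\notin T$ sends $x_i^{1-i}$ to $x_i^{i-1}$ and $\bar{B}_k$ to $x_i\bar{B}_k$; the two changes combine to replace the factor $\bar{B}_k\,x_i^{1-i}$ by $\bar{B}_k\,x_i^{i}$, while $S$ and the factors $x_j^j$ for $j\in T$ are untouched. Since $\CT_x$ is invariant under these reflections, every one of the $2^k$ summands contributes $\CT_x \bar{B}_k(x)\,S\prod_{i=1}^k x_i^i$, so $\CT_x \bar{B}_k S\prod_i(x_i^i+x_i^{1-i})=2^k\,\CT_x \bar{B}_k S\prod_i x_i^i$; dividing the prefactor $(2k)!/2^k$ by $n!=(2k)!$ then leaves exactly the right-hand side of \eqref{e-c-pe}.

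The odd case is identical in spirit: starting from \eqref{e-Qodd} and expanding $\prod_{i=1}^k(x_i^{-i}-x_i^i)=\sum_{T}(-1)^{|T|}\prod_{i\in T}x_i^i\prod_{i\notin T}x_i^{-i}$, the reflections $x_i\mapsto x_i^{-1}$ for $i\notin T$ now pick up a factor $(-1)^{k-|T|}$ from $B_k\mapsto -B_k$ together with the conversion $x_i^{-i}\mapsto x_i^i$, so each summand equals $(-1)^{|T|}(-1)^{k-|T|}\CT_x B_k S\prod_i x_i^i=(-1)^k\CT_x B_k S\prod_i x_i^i$. Summing the $2^k$ terms gives $(-2)^k\CT_x B_k S\prod_i x_i^i$, which cancels the $(-2)^k$ in the prefactor and, with the surviving $p(1)$ from the odd specialization, produces \eqref{e-c-po}. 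The only point requiring genuine care is the sign bookkeeping in this last step, making sure the reflections generate $(-2)^k$ rather than $2^k$, together with the preliminary observation that $S\bar{B}_k\prod(\cdots)$ and $SB_k\prod(\cdots)$ are Laurent polynomials so that $\CT_x$ really is reflection-invariant; everything else is a mechanical substitution.
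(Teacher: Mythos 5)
Your proof is correct, and it takes a genuinely different route from the paper's. Both arguments begin identically, specializing Theorem \ref{t-P} to $P(x)=\prod_i p(x_i)$, and both ultimately rest on the same one-variable symmetry, namely that $p(x)p(x^{-1})$ is invariant under $x\mapsto x^{-1}$. But the mechanisms for collapsing the factor $\prod_i(x_i^{i}+x_i^{1-i})$ (resp.\ $\prod_i(x_i^{-i}-x_i^{i})$) diverge: the paper writes $\bar{B}_k$ (resp.\ $B_k$) as a determinant, pushes $\CT_{x_i}$ inside row by row using multilinearity, applies $\CT_x x^a p(x)p(x^{-1})=\CT_x x^{-a}p(x)p(x^{-1})$ entrywise to merge the four monomials in each entry into two (absorbing the factor $2^{-k}$, resp.\ $(-2)^{-k}$), and then reassembles the resulting determinant of constant terms into $\CT_x \bar{B}_k\prod_i x_i^i\,p(x_i)p(x_i^{-1})$. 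You instead act globally with the $2^k$ reflections $x_i\mapsto x_i^{-1}$, using the transformation laws $\bar{B}_k\mapsto x_i\bar{B}_k$ and $B_k\mapsto -B_k$ (both of which you verify correctly) to show all $2^k$ subset terms contribute equally, with the sign bookkeeping in the odd case producing exactly the $(-2)^k$ needed to cancel the prefactor. Your version is arguably cleaner conceptually --- it is pure hyperoctahedral symmetry of the constant-term operator and never opens up a determinant --- and your preliminary remark that everything in sight is a Laurent polynomial, so $\CT_x$ is honestly reflection-invariant, is exactly the right justification. What the paper's route buys in exchange is the intermediate expression $\det\bigl(\CT_{x_i}(x_i^{i+j-1}+x_i^{i-j})p(x_i)p(x_i^{-1})\bigr)$, which is not just scaffolding: it is precisely the single-determinant form that gets reused verbatim in Theorem \ref{t-red-det} and in the binomial determinants for $f(x)=(1+x)^m$, so the determinantal detour is doing double duty in the paper.
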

\begin{proof}
By applying Theorem \ref{t-P} with $P(x)=\prod_{i=1}^{2k}p(x_i)$,
the left-hand side of \eqref{e-c-pe} becomes
\begin{align*}
 & \CT_x {2^{-k}}\det \left(x_i^{j-1}+x_i^{-j} \right)_{1\le i,j \le
k} \prod_{i=1}^k(x_i^{i}+x_i^{1-i})
\prod_{i=1}^kp(x_i)p(x_i^{-1})\\
 &={2^{-k}}\det \left(\CT_{x_i}(x_i^{j-1}+x_i^{-j})(x_i^{i}+x_i^{1-i})p(x_i)p(x_i^{-1})  \right)_{1\le i,j \le
 k} \\
 &=\det \left( \CT_{x_i} (x_i^{i+j-1}+x_i^{i-j})p(x_i)p(x_i^{-1}) \right)_{1\le i,j \le
 k}\\
&=\CT_x \bar{B}_k(x)\prod_{i=1}^k x_i^i p(x_i)p(x_i^{-1}).
\end{align*}
Here we used the fact $\CT_x x^i p(x) p(x^{-1})= \CT_x x^{-i} p(x) p(x^{-1})$. Similarly,
by applying Theorem \ref{t-P} with $P(x)=\prod_{i=1}^{2k+1}p(x_i)\rule{0pt}{16pt}$,
the left-hand side of \eqref{e-c-po} becomes
\begin{align*}
 & p(1)\CT_x {(-2)^{-k}}\det \left(x_i^{-j}-x_i^{j} \right)_{1\le i,j \le
k}  \prod_{i=1}^k(x_i^{-i}-x_i^{i})
\prod_{i=1}^kp(x_i)p(x_i^{-1})\\
 &=p(1){(-2)^{-k}}\det \left(\CT_{x_i}(x_i^{-j}-x_i^{j})(x_i^{-i}-x_i^{i})p(x_i)p(x_i^{-1})  \right)_{1\le i,j \le
 k} \\
 &=p(1)\det \left( \CT_{x_i} (x_i^{i-j}-x_i^{i+j})p(x_i)p(x_i^{-1}) \right)_{1\le i,j \le
 k}\\
&=p(1)\CT_x {B}_k(x)\prod_{i=1}^k x_i^i p(x_i)p(x_i^{-1}).
\end{align*}
\end{proof}

In order to prove Theorem \ref{t-P}, we need some notations.
The degree $\deg_{x_1}Q$ of a rational function $Q$ in $x_1$ is
defined to be the degree of the numerator minus the degree of the
denominator in $x_1$. If $\deg_{x_1}Q<0$, then we say that $Q$ is
\emph{proper} in $x_1$. The partial fraction decomposition of a
proper rational function has no polynomial part. The following lemma is by
direct application of partial fraction decomposition.

\begin{lem}\label{l-gratifying}
If $Q$ is gratifying in $x_1,x_2,\dots,x_n$, then
$$ \CT_{x_1} Q(x_1,\dots,x_n) = A_0+A_2+\cdots+A_n, $$
where $A_0=Q(1-x_1) \big|_{x_1=1}$,
$A_r=Q(1-x_1x_r)\big|_{x_1=1/x_r}, 2\le r\le n$. Moreover, $A_0$ is
gratifying in $x_2,\dots,x_n$, and $A_r$ is gratifying in
$x_2,\dots,x_{r-1},x_{r+1},\dots,x_n$.
\end{lem}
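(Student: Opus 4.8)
The plan is to treat $Q$ as a rational function of the single variable $x_1$ with coefficients in the field $\QQ(x_2,\dots,x_n)$ and to read off $\CT_{x_1}Q$ from its partial fraction decomposition. First I would check that $Q$ is \emph{proper} in $x_1$, so that the decomposition has no polynomial part. Counting degrees in $x_1$: the factor $\prod_{1\le i\ne j\le n}(1-x_i/x_j)$ contributes the $n-1$ factors $(1-x_1/x_j)$ of degree $+1$ and the $n-1$ factors $(1-x_i/x_1)$ of degree $-1$, for net degree $0$; the factor $P(x_1^{-1},\dots,x_n^{-1})$ has degree $\le 0$; while the denominator contributes $(1-x_1)$ together with the $n-1$ factors $(1-x_1x_j)$, for degree $n$. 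Hence $\deg_{x_1}Q\le -n<0$ and $Q$ is proper.

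Next I would locate the poles in $x_1$. Using $\prod_{i=2}^n(1-x_i/x_1)=x_1^{-(n-1)}\prod_{i=2}^n(x_1-x_i)$, the denominator of $Q$, as a polynomial in $x_1$, is (up to a unit in $\QQ(x_2,\dots,x_n)$)
$$x_1^{\,n-1}(1-x_1)\prod_{r=2}^n(1-x_1x_r),$$
whose linear factors $1-x_1$ and $1-x_1x_r$ are pairwise coprime, so all poles away from the origin are simple. Therefore
$$Q=\frac{R(x_1)}{x_1^{\,n-1}}+\frac{A_0}{1-x_1}+\sum_{r=2}^n\frac{A_r}{1-x_1x_r},$$
where the usual residue formula for a simple pole gives $A_0=(1-x_1)Q\big|_{x_1=1}$ and $A_r=(1-x_1x_r)Q\big|_{x_1=1/x_r}$, and where $\deg_{x_1}R<n-1$ because $Q$ is proper. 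The term $R(x_1)/x_1^{\,n-1}$ is thus a combination of the strictly negative powers $x_1^{-1},\dots,x_1^{-(n-1)}$, so $\CT_{x_1}$ annihilates it; meanwhile, expanding in the iterated Laurent series ring gives $\CT_{x_1}\frac{1}{1-x_1}=\CT_{x_1}\frac{1}{1-x_1x_r}=1$. Applying $\CT_{x_1}$ termwise (legitimate since $A_0$ and the $A_r$ are free of $x_1$) yields $\CT_{x_1}Q=A_0+A_2+\cdots+A_n$.

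It remains to identify the $A$'s as gratifying rational functions, which is the computational core of the statement. For $A_0$ I would substitute $x_1=1$ into $(1-x_1)Q$ after cancelling the $(1-x_1)$ against the denominator: the factors $\prod_{i=2}^n(1-x_i)$ arising from the numerator cancel those in the denominator, the Vandermonde-type product splits as $\prod_{2\le i\ne j\le n}(1-x_i/x_j)$ times the extra factor $\prod_{j=2}^n(1-x_j^{-1})$, and one is left with exactly the gratifying form \eqref{e-Q} in $x_2,\dots,x_n$, with new numerator polynomial $\tilde P(y_2,\dots,y_n)=\prod_{j=2}^n(1-y_j)\,P(1,y_2,\dots,y_n)$. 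The computation for $A_r$ is the same in spirit: after setting $x_1=1/x_r$ and collecting the factors that become $(1-x_i/x_r)$, $(1-x_rx_j)$, and so on, the variable $x_r$ drops out of the index set and one recognizes the gratifying form in $x_2,\dots,x_{r-1},x_{r+1},\dots,x_n$. I expect this bookkeeping — tracking exactly which factors cancel and verifying that the leftover pieces reassemble into a polynomial in the inverse variables times the prescribed denominator — to be the only real obstacle; the partial fraction step itself is routine once properness and simplicity of the poles are in hand.
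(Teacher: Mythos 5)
Your proposal follows essentially the same route as the paper: check properness in $x_1$, take the partial fraction decomposition with simple poles at $x_1=1$ and $x_1=x_r^{-1}$, read off $\CT_{x_1}Q=A_0+A_2+\cdots+A_n$ (the pole at the origin contributing only negative powers), and then verify by direct substitution that $A_0$ and $A_r$ are again gratifying; your formula for the new polynomial in the $A_0$ case, $\tilde P(y_2,\dots,y_n)=\prod_{j=2}^n(1-y_j)P(1,y_2,\dots,y_n)$, is exactly the paper's $P'$. Two small inaccuracies in your degree bookkeeping should be repaired, though neither affects the argument. First, $\deg_{x_1}(1-x_i/x_1)=\deg_{x_1}\bigl((x_1-x_i)/x_1\bigr)=0$, not $-1$, so the correct count gives $\deg_{x_1}Q\le (n-1)+0-n=-1$, not $\le -n$; properness still holds, which is all that is used. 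Second, the order of the pole at $x_1=0$ need not be $n-1$: the factor $P(x_1^{-1},\dots,x_n^{-1})$ can contribute further powers of $x_1$ to the denominator, so you should write $x_1^{d}$ for a suitable $d$ (as the paper does) rather than $x_1^{n-1}$; since $\CT_{x_1}$ annihilates all strictly negative powers, the conclusion is unchanged. Finally, you leave the $A_r$ verification as a sketch; it does go through as you expect, but the one point worth making explicit (and which the paper's computation shows) is that the resulting polynomial $P''(x_2,\dots,x_{r-1},x_{r+1},\dots,x_n)$ has coefficients that are Laurent polynomials in $x_r$ — e.g.\ the factor $\frac{(1+x_r)^2}{x_r}\prod_{j}\bigl(1-\frac{1}{x_rx_j}\bigr)\bigl(1-\frac{x_r}{x_j}\bigr)$ — which is consistent with gratifying-ness because only polynomiality in the inverses of the remaining active variables is required.
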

\begin{proof}
Assume $Q$ is given by \eqref{e-Q}. We claim that $Q$ is proper in
$x_1$. This can be easily checked by observing that for $m$ being free of $x_1$,
the degree (in $x_1$) of $(1-x_1 m)$ is $1$ and the degree of $1-m/x_1$ and $1-m$ are both $0$.

Now the partial fraction decomposition of $Q$ can be written in the
following form.
\begin{align*}
Q = \frac{p_0(x_1)}{x_1^d} + \frac{A_0}{1-x_1}+\sum_{r=2}^n
\frac{A_r}{1-x_1x_r},
\end{align*}
where $d$ is a nonnegative integer, $p_0(x_1)$ is a polynomial of
degree less than $d$, and $A_0,A_2,\dots, A_n$ are independent of
$x_1$
 given by $A_0=Q(x)(1-x_1)
\big|_{x_1=1}$, $A_r=Q(x)(1-x_1x_r) \big|_{x_1=x_r^{-1}}$ for $r\ge
2$. Now clearly we have
$$\CT_{x_1} Q(x)= A_0+A_2+\cdots +A_n. $$
This proves the first part of the lemma.

For the second part, we need to rewrite $A_r$ in the right form. For
$r=0$ we have
\begin{align*}
A_0 &=\left.\frac{\prod_{j=2}^n (1-\frac{x_1}{x_j})
\prod_{i=2}^n(1-\frac{x_i}{x_1})   \prod_{2\le i\ne j \le n}
(1-\frac{x_i}{x_j})P(x_1^{-1},\dots, x_n^{-1})}{\prod_{i=2}^n
(1-x_i) \prod_{j=2}^n(1-x_1x_j)\prod_{2\le i<j\le n}
(1-x_ix_j) }\right|_{x_1=1}\\
&=\frac{\prod_{j=2}^n (1-\frac{1}{x_j}) \prod_{i=2}^n(1-{x_i})
\prod_{2\le i\ne j \le n} (1-\frac{x_i}{x_j})P(1,x_2^{-1},\dots,
x_n^{-1})}{\prod_{i=2}^{n} (1-x_i)
\prod_{j=2}^{n}(1-{x_j})\prod_{2\le i<j\le n} (1-x_ix_j) } \\
&= \frac{\prod_{2\le i\ne j \le n} (1-\frac{x_i}{x_j})
P'(x_2^{-1},\dots,x_n^{-1})}{\prod_{i=2}^{n} (1-x_i)\prod_{2\le
i<j\le n} (1-x_ix_j) },
\end{align*}
where $P'(x_2,\dots,x_n)$ is a polynomial in $x_2,\dots,x_n$ given
by
$$ P'(x_2^{-1},\dots,x_n^{-1})=P(1,x_2^{-1},\dots,
x_n^{-1}) \prod_{i=2}^{n} (1-x_i^{-1}).$$ Thus $A_0$ is gratifying
in $x_2,\dots,x_n$ as desired.

For $r\ge 2$, without loss of generality, we may assume $r=n$. We
have
\begin{align*}
A_n &=\left.\frac{\prod_{j=2}^n (1-\frac{x_1}{x_j})
\prod_{i=2}^n(1-\frac{x_i}{x_1})   \prod_{2\le i\ne j \le n}
(1-\frac{x_i}{x_j})P(x_1^{-1},\dots, x_n^{-1})}{(1-x_1)\prod_{i=2}^n
(1-x_i) \prod_{j=2,j\ne n}^n(1-x_1x_j)\prod_{2\le i<j\le n}
(1-x_ix_j) }\right|_{x_1=1/x_n}\\
&=\frac{\prod_{j=2}^n (1-\frac{1}{x_nx_j}) \prod_{i=2}^n(1-{x_ix_n})
\prod_{2\le i\ne j \le n} (1-\frac{x_i}{x_j})P(x_n,x_2^{-1},\dots,
x_n^{-1})}{(1-\frac{1}{x_n})\prod_{i=2}^{n} (1-x_i)
\prod_{j=2}^{n-1}(1-\frac{x_j}{x_n})\prod_{2\le i<j\le n}
(1-x_ix_j) }
\end{align*}

After massive cancelation, we obtain
\begin{align*}
A_n &=\frac{P''(x_2^{-1},\dots,x_{n-1}^{-1})\prod_{2\le i\ne j \le
n-1} (1-\frac{x_i}{x_j}) }{\prod_{i=2}^{n-1} (1-x_i) \prod_{2\le
i<j\le n-1} (1-x_ix_j)},
\end{align*}
where $P''(x_2,\dots,x_{n-1})$ is a polynomial in $x_2,\dots,
x_{n-1}$ given by
\begin{align*}
  \frac{P''(x_2^{-1},\dots,x_{n-1}^{-1})}{P(x_n,x_2^{-1},\dots, x_n^{-1})}
  &=\frac{(1-\frac{1}{x_n^2})(1-x_n^2)\prod_{j=2}^{n-1} (1-\frac{1}{x_nx_j})
\prod_{j=2}^{n-1} (1-\frac{x_n}{x_j})}{(1-\frac{1}{x_n})(1-x_n)
 } \\
  &=\frac{(1+x_n)^2}{x_n}
\prod_{j=2}^{n-1} (1-\frac{1}{x_nx_j})(1-\frac{x_n}{x_j}).
\end{align*}
Thus $A_n$ is gratifying in $x_2,\dots,x_{n-1}$ as desired.
\end{proof}

To evaluate the constant term of a gratifying $Q$, we can
iteratively apply Lemma \ref{l-gratifying}. This will result in a
big sum of simple terms. We shall associate to each term a partial
matching to keep track of them. To be precise, we describe this as
follows.

Start with $Q$ associated with the empty matching. At every step we
have a set of terms, each associated with a partial matching
consisting of blocks of size $1$ or $2$. For a term $R$ associated
with $M$, we can see from iterative application of Lemma
\ref{l-gratifying} that $R$ is gratifying in all variables except
for those with indices in $M$. If $M$ is a full matching, i.e., of
$[n]:=\{1,2,\dots,n\}$, then put $R$ into the output; otherwise suppose the smallest
such variable is $x_i$. Then applying Lemma \ref{l-gratifying} with
respect to $x_i$ gives a sum of terms. One term is similar to $A_0$,
associate to it with $M \cup \{\{i\}\}$, and the other terms are
similar to $A_r$, associate to it $M\cup \{\{i, r\}\}$.

If we denote by $Q_M$ the term corresponding to $M$, then we have
$$Q_M= Q(1-x_{i_1}x_{j_1})\cdots(1-x_{i_s}x_{j_s})(1-x_{i_{s+1}})\cdots(1-x_{i_{s+r}})
\Big|_{x_{i_e}=x_{j_e}^{-1},x_{i_f}=1}^{1\le e\le s<f\le s+r} , $$
where $\{i_e,j_e\}$ and $\{i_f\}$ are all the 2-blocks and 1-blocks.

Observing that in the $A_0$-terms the factor $(1-x_j)$ appears in the
numerator, we see that $Q_M=0$ if $M$ has two singleton blocks.

The above argument actually gives the following result.

\begin{prop}\label{p-Matching}
If $Q$ is gratifying in $x_1,\dots,x_n$, then
  $$ \CT_x Q =\sum_{M} \CT_x Q_M,  $$
  where the sum ranges over all full matchings with at most one
  singleton block.
\end{prop}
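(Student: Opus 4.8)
The plan is to formalize the iterative procedure described just above the proposition as an induction, taking as the central invariant the identity
$$\CT_x Q=\sum_{M\in\mathcal{M}}\CT_x Q_M,$$
where $\mathcal{M}$ denotes the current collection of partial matchings produced by the procedure. Initially $\mathcal{M}=\{\emptyset\}$ and the invariant is the tautology $\CT_x Q=\CT_x Q$. I would then run the procedure until every $M\in\mathcal{M}$ is a full matching, inducting on the total number of unmatched indices; since each step of the procedure matches at least one new index and there are only $n$ indices in all, the process terminates after finitely many steps.

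For the inductive step I would take any term $Q_M$ whose matching $M$ is not yet full, let $x_i$ be its smallest unmatched variable, and recall that by the iterated application of Lemma~\ref{l-gratifying} the term $Q_M$ is gratifying in all of its unmatched variables. Applying Lemma~\ref{l-gratifying} in the variable $x_i$ gives
$$\CT_{x_i}Q_M=A_0+\sum_{r}A_r,$$
and, since the substitutions that define the closed form of $Q_M$ simply compose, matching them term by term identifies $A_0=Q_{M\cup\{\{i\}\}}$ and $A_r=Q_{M\cup\{\{i,r\}\}}$. Because $A_0$ and each $A_r$ are free of $x_i$ and stay gratifying in their remaining unmatched variables, and because $\CT_x$ is the iterated single-variable constant term that acts as the identity on functions free of the variable, replacing the single entry $Q_M$ in $\mathcal{M}$ by the entries $Q_{M\cup\{\{i\}\}}$ and $Q_{M\cup\{\{i,r\}\}}$ preserves the invariant by linearity of $\CT_x$. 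Iterating to termination yields $\CT_x Q=\sum_{M}\CT_x Q_M$ with the sum over all full matchings of $[n]$ into blocks of sizes $1$ and $2$.

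It then remains to discard the full matchings with two or more singleton blocks, and I expect this to be the only real work. The goal is to prove $Q_M=0$ whenever $M$ has at least two singletons. The mechanism is the factor already recorded in the proof of Lemma~\ref{l-gratifying}: the first time the procedure creates a singleton block, the new gratifying numerator $P'$ acquires a factor $(1-x_j^{-1})$ for every still-unmatched variable $x_j$. The delicate point is to verify that this factor survives all subsequent $2$-block steps; inspecting the $A_r$-formula of Lemma~\ref{l-gratifying} shows that a $2$-block operation multiplies the numerator polynomial by a factor that is regular and nonvanishing at $x_j=1$ for every index $j$ outside the block, so that $(1-x_j^{-1})$ persists until $x_j$ is itself processed. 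Hence, as soon as a second singleton is taken, say at $x_j$, the singleton substitution sets the $x_j$-slot of the current numerator polynomial to $1$, and the surviving factor $(1-x_j^{-1})$ is evaluated at $x_j=1$, forcing $Q_M=0$. This argument must be carried out inside the gratifying form at each stage rather than on the closed expression for $Q_M$, since substituting two singleton variables directly to $1$ in $Q$ produces a $0/0$ from the numerator factor $1-x_{i_f}/x_{i_g}$ and the denominator factor $1-x_{i_f}x_{i_g}$.

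Finally I would observe that the number of singleton blocks in any full matching of $[n]$ is congruent to $n$ modulo $2$, so the surviving matchings are precisely the perfect matchings when $n$ is even and the matchings with exactly one singleton when $n$ is odd. In both cases these are exactly the full matchings with at most one singleton block, which gives the stated identity.
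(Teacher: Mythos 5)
Your proof is correct and takes essentially the same route as the paper: iterate Lemma~\ref{l-gratifying}, index the resulting terms by partial matchings, and annihilate every term whose matching has two singleton blocks via the factor $\prod_j (1-x_j^{-1})$ that the first singleton step plants in the numerator polynomial $P'$. The only difference is one of rigor—you spell out the induction invariant, the persistence of that factor through subsequent $2$-block steps, and the $0/0$ pitfall in the closed form for $Q_M$—details the paper compresses into the single observation that in the $A_0$-terms the factor $(1-x_j)$ appears in the numerator.
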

This result becomes nice when $Q$ is symmetric. We need the following lemma, which is by straightforward
calculation.
\begin{lem}\label{l-simplify}
Let $Q$ be as in \eqref{e-Q} with $P=1$. If $n=2k$, then we have
\begin{align}
Q_{\{\{1,k+1\},\dots,\{k,2k\} \}} 
&=\bar{B}_k(x_{k+1},\dots,x_{2k})^2x_{k+1}x_{k+2}\cdots x_{2k};
\end{align}
If $n=2k+1$, then we have
\begin{align}
Q_{\{\{1,k+1\},\dots,\{k,2k\},\{2k+1\} \}}
&=(-1)^kB_k(x_{k+1},\dots,x_{2k})^2.
\end{align}
\end{lem}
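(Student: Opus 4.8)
The statement is Lemma~\ref{l-simplify}, asserting that the ``fully matched'' term $Q_M$ (with $P=1$) factors into a clean product involving $\bar B_k$ or $B_k$. The plan is to compute $Q_M$ directly from the explicit product formula for $Q_M$ given just before Proposition~\ref{p-Matching}, using the perfect matching $M=\{\{1,k+1\},\dots,\{k,2k\}\}$ (plus the singleton $\{2k+1\}$ in the odd case). First I would substitute $x_{i_e}=x_{j_e}^{-1}$ for each $2$-block $\{i_e,j_e\}=\{e,k+e\}$, i.e. set $x_e=x_{k+e}^{-1}$, and in the odd case additionally set $x_{2k+1}=1$ for the singleton. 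The factors $(1-x_{i_e}x_{j_e})$ in $Q_M$ cancel exactly the $k$ denominator factors $(1-x_ex_{k+e})$ of $Q$, so the key task is to track what survives in the remaining product $\prod_{1\le i\ne j\le n}(1-x_i/x_j)\big/\big[\prod(1-x_i)\prod_{i<j}(1-x_ix_j)\big]$ after the substitution.

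\emph{Key steps.} I would organize the surviving factors by how the matched pairs interact. Writing $y_e:=x_{k+e}$ for $1\le e\le k$ (so $x_e=y_e^{-1}$), the factors $(1-x_i/x_j)$ split into four types according to whether $i,j$ lie in the ``first half'' $\{1,\dots,k\}$ or ``second half'' $\{k+1,\dots,2k\}$; after the substitution each becomes a function of the $y_e$'s. The diagonal-type factors within a single block (such as $(1-x_e/x_{k+e})=(1-y_e^{-2})$) combine with leftover denominator factors $(1-x_e)=(1-y_e^{-1})$ and the Vandermonde-like cross terms to produce, after systematic cancellation, exactly the product $\prod_e y_e$ times the square of the determinantal quantity $\bar B_k(y)$ or $B_k(y)$. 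The cleanest route is to recognize the emerging product as the known factored form \eqref{e-Bbp} (resp.\ \eqref{e-Bp}) squared: since $\bar B_k(y)^2=\prod_e\frac{(1+y_e)^2}{y_e^{2k}}\prod_{e<f}(y_e-y_f)^2(1-y_ey_f)^2$, I would match the collected factors against this expression term by term, and likewise $B_k(y)^2$ in the odd case, where setting $x_{2k+1}=1$ contributes factors that account for the sign $(-1)^k$ and the absence of a trailing $\prod y_e$.

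\emph{Main obstacle.} The hard part is the bookkeeping of the ``massive cancellation'' that is only alluded to: tracking the exact powers of each $y_e$ (to get the overall monomial prefactor $\prod y_e$ in the even case, and to confirm none survives in the odd case) and verifying that the cross terms assemble into a perfect square rather than merely a product of distinct linear factors. I would handle this by computing the total $x_e$-degree of $Q_M$ as a consistency check—comparing against the known degree of $\bar B_k^2\prod y_e$—and by exploiting the antisymmetry of $B_k,\bar B_k$ under permuting the $y_e$'s to reduce the verification to leading coefficients. The odd-case sign $(-1)^k$ is the most error-prone point, since it arises from the interplay of the singleton substitution $x_{2k+1}=1$ with the sign conventions built into $B_k=\det(x_i^{-j}-x_i^j)$ in \eqref{e-Bd}; I would pin it down by direct evaluation at small $k$ (say $k=1$) and then confirm the pattern is preserved by the cancellation structure.
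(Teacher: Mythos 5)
Your proposal is correct and is essentially the paper's own approach: the paper dismisses Lemma~\ref{l-simplify} as ``by straightforward calculation,'' and that calculation is exactly the direct substitution you outline — setting $x_e=x_{k+e}^{-1}$ (and $x_{2k+1}=1$), sorting the factors $(1-x_i/x_j)$ by half, and matching the surviving product against the squared factored forms \eqref{e-Bbp} and \eqref{e-Bp}, with the sign $(-1)^k$ emerging from the $k$ factors $(1-y_e)(1-y_e^{-1})=-(1-y_e)^2/y_e$ produced by the singleton's interaction with each $2$-block. Carrying out your plan does yield the stated identities, so there is no gap.
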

Note that we have the following alternative expressions:
\begin{align*} Q_{\{\{1,k+1\},\dots,\{k,2k\}
\}}&=\bar{B}_k(x_{k+1},\dots,x_{2k})\bar{B}_k(x_{k+1}^{-1},\dots,x_{2k}^{-1}),\\
Q_{\{\{1,k+1\},\dots,\{k,2k\},\{2k+1\} \}}
&=B_k(x_{k+1},\dots,x_{2k})B_k(x_{k+1}^{-1},\dots,x_{2k}^{-1}).
\end{align*}

\begin{proof}[Proof of Theorem \ref{t-P}]

If $n=2k$, then Proposition \ref{p-Matching} states that
$$\CT_x Q = \sum_{M} \CT_x Q_M, $$
where $M$ ranges over all complete matchings of $[n]$, i.e., every
block has exactly two elements. There are
$(2k-1)!!=(2k-1)(2k-3)\cdots 1$ such $M$. Since $Q_M$ are all
Laurent series and $Q$ is symmetric in all variables, they have the
same constant terms. Therefore
\begin{align}
\CT_x Q= (2k-1)!!\CT_{x_{k+1},\dots, x_{2k}} Q_{M_0}, \label{e-ct-Q}
\end{align}
 where $M_0$ is taken to be
$\{\{1,k+1\},\dots,\{k,2k\} \}$. It is an exercise to show that
\begin{align*}
Q_{M_0}=P(x_{k+1},\dots,x_{2k},x_{k+1}^{-1},\dots,x_{2k}^{-1})\bar{B}_k(x_{k+1},\dots,x_{2k})^2x_{k+1}\cdots
x_{2k}.
\end{align*}
This gives \eqref{e-Qeven'} immediately after renaming the parameters. By applying the SS-trick, Lemma \ref{l-simplify}, and equation \eqref{e-Bbd}, we obtain
$$\CT_{x}Q_{M_0} =k!\CT_{x} P(x_1,\dots,x_{k}, x_1^{-1},\dots,x_k^{-1}) \bar{B}_k  \prod_{i=1}^k(x_i^{i}+x_i^{1-i}).  $$
The above formula and \eqref{e-ct-Q} yield \eqref{e-Qeven}.

If $n=2k+1$, then by a similar argument, we have
$$\CT_x Q =(2k+1)!!\CT_{x_{k+1},\dots, x_{2k}} Q_{M_1},  $$
where $M_1$ is taken to be ${\{\{1,k+1\},\dots,\{k,2k\},\{2k+1\}
\}}$ and we have
\begin{align*}
Q_{M_1}=(-1)^kP(x_{k+1},\dots,x_{2k},x_{k+1}^{-1},\dots,x_{2k}^{-1},1)B_k(x_{k+1},\dots,x_{2k})^2.
\end{align*}
Thus \eqref{e-Qodd} and \eqref{e-Qodd'} follow in a similar way.
\end{proof}

\section{Applications: a determinants reduction identity}
Zeilberger obtained the following more general transformation in \cite{zeilberger-dollars}.
\begin{thm}[Zeilberger]\label{t-fg}
Let $f(x)$ and $g(x)$ be polynomials and let $M$ be the $n\times ((\deg f)+(n-1) \deg(g)+1)$ matrix with entries given by
$$ M_{i,j} = \CT_x  \frac{f(x)g(x)^{i-1}}{x^{j-1}}.$$
Then the sum of all $n\times n$ minors of $M$ equals
\begin{align}\label{e-fg}
   \frac{1}{n!}\CT_x \frac{ \prod_{i=1}^n f(x_i^{-1}) \prod_{1\le i<j\le n} (x_i-x_j)(g(x_i^{-1})-g(x_j^{-1}))}
{\prod_{i=1}^n (1-x_i) \prod_{1\le i<j\le n} (1-x_ix_j)}.
 \end{align}
\end{thm}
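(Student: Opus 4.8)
The plan is to show that the sum of all $n\times n$ minors of $M$ can be written as a constant term, and then to massage that constant term into the symmetric product form on the right-hand side of \eqref{e-fg}. First I would recall the Cauchy--Binet-type identity: the sum of all maximal minors of the $n\times N$ matrix $M$ equals $\sum_{1\le j_1<\cdots<j_n\le N} \det(M_{i,j_s})_{1\le i,s\le n}$. Writing out each entry $M_{i,j}=\CT_x f(x)g(x)^{i-1}x^{-(j-1)}$ and expanding the determinant along the Leibniz formula, I would introduce one constant-term variable $x_s$ for each column index $j_s$. The key bookkeeping step is to recognize that summing over all strictly increasing sequences $j_1<\cdots<j_n$ is, up to the usual antisymmetrization, the same as summing over \emph{all} sequences of distinct column indices divided by $n!$, which is what lets the $x$-variables be symmetrized and eventually turned into a single clean constant term.

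The central computation is to evaluate $\sum_{j_1,\dots,j_n} \prod_{s=1}^n x_s^{-(j_s-1)}$ where the $j_s$ range over their allowed values. The trick is to sum each geometric-type series. After attaching the factors $f(x_s^{-1})$ and $g(x_s^{-1})^{i-1}$ coming from the entry definitions (with the roles of $x$ and $x^{-1}$ exchanged by the $\CT$ substitution), I would show that the determinantal structure in the row index $i$ produces the Vandermonde-like factor $\prod_{i<j}(g(x_i^{-1})-g(x_j^{-1}))$, since $\det(g(x_s^{-1})^{i-1})_{i,s}$ is exactly a generalized Vandermonde in the quantities $g(x_s^{-1})$. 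Simultaneously, summing the geometric series in the column indices $j_s$ from their lower bound up to the maximum column count should contribute the denominator $\prod_s(1-x_s)$ together with the factor $\prod_{i<j}(1-x_ix_j)$ and the Vandermonde $\prod_{i<j}(x_i-x_j)$; this is the step that forces the precise number of columns $(\deg f)+(n-1)\deg g+1$, because the degree bound on $j$ must be exactly high enough that the geometric series closes up into the rational function without truncation error.

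I expect the main obstacle to be the careful accounting of boundary terms in the geometric summations and the verification that the number of columns is exactly right so that no correction terms survive. Concretely, each $M_{i,j}$ is the coefficient of $x^{j-1}$ in $f(x)g(x)^{i-1}$, which is a polynomial of degree at most $(\deg f)+(n-1)\deg g$ in the row $i=n$; the column count is chosen so that $j-1$ ranges over exactly the support of these polynomials. The delicate point is that when I replace the finite sum over columns by an infinite geometric series (to get the clean factor $1/(1-x_s)$), I must confirm that the extra terms beyond the true column range vanish identically, and this is precisely guaranteed by the degree bound. I would also need to check that the antisymmetrization over column subsets correctly reproduces the antisymmetric structure $\prod_{i<j}(x_i-x_j)$ rather than introducing spurious symmetric factors.

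Once the summations are carried out, the resulting expression is manifestly of the claimed form \eqref{e-fg}, with the $\tfrac{1}{n!}$ arising from the passage between ordered and unordered column subsets, and the substitution $x\mapsto x^{-1}$ in $f$ and $g$ coming from the definition of $M_{i,j}$ as a constant term with $x^{-(j-1)}$. I would remark that the special case $f(x)=(1+x)^{n-1}$-type choices recovers Identity \ref{iden-And}, and that combining this theorem with Corollary \ref{c-p} is what ultimately reduces the double-size constant term to a single determinant in the applications that follow. The whole argument is essentially a Lindström--Gessel--Viennot / Cauchy--Binet computation dressed in constant-term language, so no genuinely new idea is needed beyond the bookkeeping, which is why Zeilberger could state it as a general transformation.
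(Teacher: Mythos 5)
First, note that the paper does not actually prove Theorem \ref{t-fg}; it is quoted from \cite{zeilberger-dollars}, so the benchmark is Zeilberger's original derivation, and your skeleton does match it: write each entry as a coefficient, pull out the Vandermonde $\det\bigl(g(x_s^{-1})^{i-1}\bigr)_{i,s}=\pm\prod_{i<j}\bigl(g(x_i^{-1})-g(x_j^{-1})\bigr)$, extend the column sum to infinity (legitimate, as you correctly observe, because $\deg f(x)g(x)^{i-1}\le (\deg f)+(n-1)\deg g=N-1$ forces $M_{i,j}=0$ for $j>N$, so all added minors vanish), and invert variables at the end. However, your central bookkeeping claim is wrong as stated: replacing the sum over increasing column indices $j_1<\cdots<j_n$ by $\frac{1}{n!}$ times the sum over all \emph{distinct ordered} tuples fails here, because the summand $\det(M_{i,j_s})_{i,s}$ is \emph{antisymmetric} in $(j_1,\dots,j_n)$ — summing it over the $n!$ orderings of a fixed column set gives $0$, not $n!$ times the minor. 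The Cauchy--Binet maneuver you invoke requires a product of two $j$-antisymmetric factors whose product is symmetric, and in this problem there is only one. The $\frac{1}{n!}$ in \eqref{e-fg} actually enters by symmetrizing the $x$-variables inside the constant term (the SS-trick of Section 2), after which the monomials antisymmetrize into $\det\bigl(x_t^{-(j_s-1)}\bigr)$ and the column sum must still be carried out over the increasing region.

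The second gap is that $\prod_{i<j}(1-x_ix_j)^{-1}$ cannot arise from ``summing each geometric-type series'': independent geometric sums in the $j_s$ factor completely into univariate terms $\prod_s(1-x_s^{-1})^{-1}$ and produce no cross factors. The cross factors come from the constrained sum
\begin{equation*}
\sum_{1\le j_1<\cdots<j_n}\det\bigl(x_t^{\,j_s-1}\bigr)_{s,t}
=\frac{\prod_{1\le i<j\le n}(x_i-x_j)}{\prod_{i=1}^n(1-x_i)\prod_{1\le i<j\le n}(1-x_ix_j)},
\end{equation*}
which is (after the substitution $j_s=\lambda_{n+1-s}+s$) precisely Littlewood's identity $\sum_{\lambda} s_\lambda(x)=\prod_i(1-x_i)^{-1}\prod_{i<j}(1-x_ix_j)^{-1}$ multiplied by the Vandermonde. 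One can prove it by nested geometric sums respecting $j_1<\cdots<j_n$, but that evaluation is the real content of the theorem, not routine bookkeeping, and your plan neither names it nor indicates how the nested sums close up. With these two repairs — SS-trick on the $x$'s instead of the ordered-tuple trick, and the Littlewood identity for the column sum — your outline becomes Zeilberger's proof. (A small side correction: Identity \ref{iden-And} is the case $g(x)=x(1+x)$, $f\equiv 1$, not an $f(x)=(1+x)^{n-1}$-type specialization; the exponents $i-1$ on $1+x_i^{-1}$ come from $g^{i-1}$.)
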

He considered two cases: i) $g(x)=x(1+x)$, and ii) $g(x)=1+x$, both with $f(x)=(1+x)^{m}$. Case i) with $m=0$ corresponds to Identity \ref{iden-And} and Case ii) corresponds to Identity \ref{iden-simple}.

We start with Case ii), which is easier to simplify. Observe that
$$  (x_i-x_j)(g(x_i^{-1})-g(x_j^{-1}))
=  (x_i-x_j)(x_i^{-1}-x_j^{-1}) = (1-x_i/x_j)(1-x_j/x_i).$$
Then by applying Corollary \ref{c-p} with $p(x)=f(x)$, we obtain:

\begin{thm}\label{t-red-det}
  Let $M$ be as in Theorem \ref{t-fg} with $g(x)=1+x$. Then the sum of all $n\times n$ minors of $M$ equals
  $$ \left\{\begin{array}
    {ll}
    \displaystyle \det \left( \CT_{x} (x^{i+j-1}+x^{i-j})f(x)f(x^{-1}) \right)_{1\le i,j \le
 k}
   & \textrm{ if } n=2k;\\
    \displaystyle  f(1)\det \left( \CT_{x} (x^{i-j}-x^{i+j})f(x)f(x^{-1}) \right)_{1\le i,j \le
 k}   & \textrm{ if } n=2k+1. \rule{0pt}{16pt}
  \end{array}  \right.  $$
\end{thm}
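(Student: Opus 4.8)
The plan is to combine Zeilberger's transformation (Theorem \ref{t-fg}) with the already-established Corollary \ref{c-p}, using the special structure of the case $g(x) = 1+x$. First I would invoke Theorem \ref{t-fg} to write the sum of all $n \times n$ minors of $M$ as the constant term \eqref{e-fg}. The key simplifying observation, already recorded in the excerpt, is that when $g(x) = 1+x$ the factor $g(x_i^{-1}) - g(x_j^{-1})$ collapses to $x_i^{-1} - x_j^{-1}$, so that
$$ (x_i - x_j)\bigl(g(x_i^{-1}) - g(x_j^{-1})\bigr) = (1 - x_i/x_j)(1 - x_j/x_i). $$
Taking the product over all pairs $1 \le i < j \le n$ turns the numerator Vandermonde-type factor into $\prod_{1 \le i \ne j \le n}(1 - x_i/x_j)$, which is exactly the form appearing in the left-hand sides of \eqref{e-c-pe} and \eqref{e-c-po}. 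Writing $f(x_i^{-1}) = p(x_i^{-1})$ with $p = f$, the constant term \eqref{e-fg} is therefore seen to coincide (including the $1/n!$ prefactor) with the left-hand side of Corollary \ref{c-p} in the appropriate parity.

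Next I would split into the two parities of $n$ and apply Corollary \ref{c-p} directly. For $n = 2k$, equation \eqref{e-c-pe} rewrites the constant term as $\CT_x \bar{B}_k(x) \prod_{i=1}^k x_i^i\, p(x_i)p(x_i^{-1})$. To reach the claimed single determinant, I would expand $\bar{B}_k$ by its defining determinant \eqref{e-Bbd} and distribute the product over the columns, exactly as is done inside the proof of Corollary \ref{c-p} itself; since $\CT_x$ of a product of functions in distinct single variables factors through the determinant (the multilinearity-of-the-determinant-under-$\CT$ step used there), the constant term becomes $\det\bigl(\CT_x (x^{i+j-1} + x^{i-j}) f(x) f(x^{-1})\bigr)_{1 \le i,j \le k}$. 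For $n = 2k+1$, I would argue identically from \eqref{e-c-po}, carrying along the scalar factor $p(1) = f(1)$ and using $B_k$ from \eqref{e-Bd}, to obtain $f(1)\det\bigl(\CT_x (x^{i-j} - x^{i+j}) f(x) f(x^{-1})\bigr)_{1 \le i,j \le k}$. This matches the two cases of the theorem verbatim.

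Since both of the genuinely substantive ingredients — Zeilberger's transformation and Corollary \ref{c-p} — are available to me as stated earlier in the excerpt, this proof is essentially a bookkeeping assembly, and I expect no real obstacle beyond verifying the numerator identity for $g(x) = 1+x$ and checking that the $1/n!$ factors align. The one place warranting care is the passage from the constant term of $\bar{B}_k(x)\prod_i x_i^i p(x_i)p(x_i^{-1})$ (resp.\ the $B_k$ version) to the single determinant: this relies on the fact that once the determinant is expanded and each monomial $x_i^{\,\cdot}$ is paired with the single-variable function $p(x_i)p(x_i^{-1})$, the operator $\CT_x$ acts variable-by-variable and can be pulled inside the determinant entries. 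This is precisely the manipulation carried out in the displayed computation proving Corollary \ref{c-p}, so it requires only that I reuse that computation rather than reprove it.
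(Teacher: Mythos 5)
Your proposal is correct and takes essentially the same route as the paper: the paper also observes that $g(x)=1+x$ gives $(x_i-x_j)(g(x_i^{-1})-g(x_j^{-1}))=(1-x_i/x_j)(1-x_j/x_i)$ and then applies Corollary \ref{c-p} with $p(x)=f(x)$, the passage to the single determinant being exactly the intermediate computation already displayed in the proof of that corollary (run in reverse), which you correctly identify as the only step needing care.
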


In particular, when $f(x)=(1+x)^m$, the left hand side of \eqref{iden-simple} becomes
$$\left\{\begin{array}
    {ll}
    \displaystyle \det \left( \binom{2m}{m+1-i-j}+\binom{2m}{m-i+j}\right)_{1\le i,j \le
 k}
   & \textrm{ if } n=2k;\\
    \displaystyle  2^m \det \left( \binom{2m}{m-i+j}-\binom{2m}{m-i-j}\right)_{1\le i,j \le
 k}  & \textrm{ if } n=2k+1.
  \end{array}  \right.$$
These determinants should be easy to evaluate$^1$\!, but Zeilberger prefered to
avoid using ``determinants" technique. This leads to the proof in Section \ref{sec-Jacobi}.

\vspace{5mm}
Case i) is a little complicated. One can summarize a formula as in Theorem \ref{t-red-det}, but we will assume $f(x)=(1+x)^m$ for brevity. Note that in \cite{zeilberger-dollars}, the exponent $i-1$ for $1+x_i^{-1}$
was correct in the proof, but was replaced by the wrong exponent
$n-i$ in the formula for $C$.

Denote \eqref{e-fg} with $f(x)=(1+x)^m$ and $g(x)=x(1+x)$ by $LHS$. We have
\begin{align*}
LHS=&\frac{1}{n!}\CT_x
\frac{\prod_{r=1}^n(1+x_r^{-1})^m\prod_{1\le i< j \le n}
(x_i-x_j)
(x_i^{-1}(1+x_i^{-1})-x_j^{-1}(1+x_j^{-1}))}{\prod_{i=1}^n (1-x_i)
\prod_{1\le i<j\le n} (1-x_ix_j) }\\
=&\frac{1}{n!}\CT_x \frac{\prod_{1\le i<
j \le n}(x_i-x_j)(x_i^{-1}-x_j^{-1})
}{\prod_{i=1}^n
(1-x_i) \prod_{1\le i<j\le n} (1-x_ix_j) } P(x^{-1}),
\end{align*}
where $P(x^{-1})$ given by
$$P(x^{-1})=\prod_{r=1}^n(1+x_r^{-1})^m\prod_{1\le i< j \le n}
(x_i^{-1}-x_j^{-1})^{-1}(x_i^{-1}+x_i^{-2}-x_j^{-1}-x_j^{-2})$$
is symmetric in the $x$'s.

By noticing $(x_i-x_j)(x_i^{-1}-x_j^{-1}) = (1-x_i/x_j)(1-x_j/x_i)$, we shall apply Theorem \ref{t-P} with
 $P$ given above. Let us consider the $n=2k$ case first. For clarity we use $g(x_i^{-1})$ for $x_i^{-1}+x_i^{-2} $. By using \eqref{e-Qeven'} and dividing the product for $1\le i<j \le n$ into the following three parts: i) $1\le i<j\le k$, ii) $k+1\le i<j\le 2k$, iii) $1\le i\le k<j\le 2k$, and then splitting part iii) as $i=j-k$, $i<j-k$, and $i>j-k$,  we have
\begin{align*}
LHS&=\frac{1}{2^k k!} \CT_x \bar B_k(x)^2 \prod_{i=1}^k x_i \prod_{r=1}^n(1+x_r^{-1})^m\prod_{1\le i< j \le n}
\frac{g(x_i^{-1})-g(x_j^{-1})}{x_i^{-1}-x_j^{-1}} \Big|_{x_{k+\ell}=x_\ell}^{\ell=1,\ldots,k}\\
&=\frac{1}{2^k k!}\CT_x \bar B_k(x)\bar B_k(x^{-1})\prod_{r=1}^k(1+x_r^{-1})^m(1+x_r)^m \prod_{i=1}^k\frac{g(x_i^{-1})-g(x_i)}{x_i^{-1}-x_i}\\
&\quad\times \prod_{1\le i< j \le k}\frac{(g(x_i^{-1})-g(x_j^{-1}))( g(x_i)-g(x_j))(g(x_i^{-1})-g(x_j))(g(x_i)-x_j^{-1}) }{
(x_i^{-1}-x_j^{-1})(x_{i}-x_{j}) (x_i^{-1}-x_{j})(x_i-x_j^{-1})}\\
&=\frac{1}{2^k k!}\CT_x \frac{\bar B_k(x)\bar B_k(x^{-1}) \prod_{1\le i\le k}(1+x_i)^{2m}(x_i^{-1}+1+x_i)\prod_{1\le i< j \le k}U_{i,j}}{\prod_{1\le i\le k} x_i^m\prod_{1\le i< j \le k}
(x_i^{-1}-x_j^{-1})(x_{i}-x_{j}) (1-x_ix_j)(1-x_i^{-1}x_j^{-1})}\\
&= \frac{1}{2^k k!}\CT_x\prod_{1\le i\le k}\frac{(x_i^{-1}+1+x_i)(1+x_i)^{2m+2}}{x_i^{m+1}} \prod_{1\le i< j \le k}{U_{i,j} }
\end{align*}
where $U_{i,j}$ is given by
\begin{align*}
  U_{i,j}&=(g(x_i^{-1})-g(x_j^{-1}))( g(x_i)-g(x_j))(g(x_i^{-1})-g(x_j))(g(x_i)-g(x_j^{-1})).
\end{align*}
 Since $U_{i,j}$ is invariant under
replacing $x_i$ by $x_i^{-1}$ or $x_j$ by $x_j^{-1}$, we can write
it in terms of $z_i$ and $z_j$ where $z_r=x_r+2+x_r^{-1}=x_r^{-1}(1+x_r)^2$:
\begin{equation*}
U_{i,j}=\left(
1-3\,z_{{i}}z_{{j}}+z_{{i}}{z_{{j}}}^{2}+{z_{{i}}}^{2}z_{{j}}
 \right)  \left(z_i -z_{{j}}\right) ^{2}.
\end{equation*}
A crucial observation is that we can write
\begin{align}
  \label{e-Uijy}
  U_{i,j}=z_iz_j (z_i^{-1}(z_i-1)^3-z_j^{-1}(z_j-1)^3)(z_i-z_j).
\end{align}

 Thus
\begin{align*}LHS&=\frac{1}{2^kk!} \CT_x \prod_{1\le i\le k}
z_i^{m+1}(z_i-1)\prod_{1\le i< j \le k} U_{i,j}
\\
&=\frac{1}{2^kk!} \CT_x  \prod_{1\le i\le k}
z_i^{m+k}(z_i-1)\prod_{1\le i< j \le k}
(z_i^{-1}(z_i-1)^3-z_j^{-1}(z_j-1)^3)(z_i-z_j)\\
&=\frac{1}{2^k} \CT_x \prod_{1\le i\le k} z_i^{m+k}(z_i-1)
z_i^{-(i-1)}(z_i-1)^{3(i-1)}\prod_{1\le i< j \le k} (z_i-z_j)
\end{align*}
Therefore we have the following determinant representation.
\begin{align}
  LHS &= \frac{1}{2^k} \det \begin{pmatrix}
                        \CT_x z^{m+k+j-i}(z-1)^{3i-2} \\
                      \end{pmatrix}_{1\le i,j\le k} \label{e-Am-2k}\\
&= \frac{1}{2^k} \det \begin{pmatrix}
                        \CT_x (x^{-1}(1+x)^2)^{m+k+j-i}(x+1+x^{-1})^{3i-2} \\
                     \end{pmatrix}_{1\le i,j\le k}\nonumber
\end{align}

The $n=2k+1$ case is very similar. We only have the extra factor
$$2^m \prod_{i=1}^k (x_i+x_i^2-2)(x_i^{-1}+x_i^{-2}-2) =2^m\prod_{i=1}^k
(2z_i+1)(z_i-4).$$
We have, similarly by the use of \eqref{e-Uijy},
\begin{align}
  LHS &= \frac{2^m}{2^k} \det \begin{pmatrix}
                        \CT_x z^{m+k+j-i}(z-1)^{3i-2}(2z+1)(z-4) \\
                      \end{pmatrix}_{1\le i,j\le k} \label{e-Am-2k1}
\end{align}
The two determinants in $(\ref{e-Am-2k},\ref{e-Am-2k1})$ might be easy for experts by ``determinants" techniques. Here we only make the following conjecture.

\begin{conj}\!\!\!$^2$
Let $M$ be the $n\times (2n+m-1)$ matrix with entries given by
$$M_{i,j}=\binom{m+i-1}{j-i},\  1\le i \le n , 1\le j \le 2n+m-1,$$
Then the sum of all $n\times n$ minors of $M$ equals
  $$\left\{\begin{array}
    {ll}
    \displaystyle \prod_{i=1}^k{\frac {(2i-2)! \left( 2\,i+2\,m-1 \right)! \left(
3\,m+4\,i-2
 \right)_{2i-2}\left( 3\,m+4\,i\right)_{2i-1} }{ \left( m+4\,i-4
 \right) !\, \left( m+4\,i-2 \right) ! }},
   & \textrm{ if } n=2k;\\
    \displaystyle  2^m\prod_{i=1}^k\frac{(2i-1)!(2m+2i+3)!
(3m+4i)_{2i-1}(3m+4i+2)_{2i}}{(m+4i-2)!(m+4i)!(2m+2i+1)_3},  & \textrm{ if } n=2k+1.
  \end{array}  \right.$$
\end{conj}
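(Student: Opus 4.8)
The plan is to recognize that the matrix $M$ in the conjecture is precisely the one produced by Theorem~\ref{t-fg} in Case~i). Since
\[
\binom{m+i-1}{j-i}=\CT_x (1+x)^{m+i-1}x^{i-j}=\CT_x \frac{(1+x)^{m}\bigl(x(1+x)\bigr)^{i-1}}{x^{j-1}},
\]
the entry $M_{i,j}$ equals $\CT_x f(x)g(x)^{i-1}x^{1-j}$ with $f(x)=(1+x)^{m}$ and $g(x)=x(1+x)$, and the number of columns $2n+m-1$ matches $(\deg f)+(n-1)\deg g+1$. Hence the sum of all $n\times n$ minors of $M$ is the constant term $LHS$ of \eqref{e-fg}, which the computation preceding the conjecture has already reduced to the single determinant \eqref{e-Am-2k} when $n=2k$ and to \eqref{e-Am-2k1} when $n=2k+1$. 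The conjecture is therefore equivalent to the closed-form evaluation of these two determinants, and all the difficulty is concentrated in that evaluation.

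To evaluate \eqref{e-Am-2k} I would work with the functional $L[p(z)]:=\CT_x p(z)$, for which $L[z^{N}]=\CT_x x^{-N}(1+x)^{2N}=\binom{2N}{N}$, equivalently $L[p]=\frac1\pi\int_0^4 p(z)\,z^{-1/2}(4-z)^{-1/2}\,dz$ after $z=2+2\cos\theta$. Expanding $(z-1)^{3i-2}$ by the binomial theorem writes each entry as an explicit integer combination of central binomial coefficients $\binom{2N}{N}$ with $N=m+k+j-i+t$, $0\le t\le 3i-2$. Pulling out of each row and column a suitable hypergeometric prefactor (a ratio of consecutive $\binom{2N}{N}$, hence a product of linear factors in $m$) and clearing the residual denominators turns \eqref{e-Am-2k} into an explicit prefactor times $\det\bigl(R_{i,j}(m)\bigr)$ with each $R_{i,j}$ a polynomial in $m$. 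I would then evaluate $\det\bigl(R_{i,j}(m)\bigr)$ by Krattenthaler's identification-of-factors method: for each conjectured Pochhammer block, such as $(3m+4i-2)_{2i-2}$ and $(3m+4i)_{2i-1}$, exhibit an explicit linear relation among the rows (or columns) that holds at the integer specializations of $m$ making that block vanish, thereby forcing all those factors to divide the determinant; then pin down the remaining constant by matching the degree in $m$ and computing the top-degree-in-$m$ coefficient, which reduces to a Vandermonde-type determinant that sums in closed form.

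The case $n=2k+1$ runs in parallel. By \eqref{e-Am-2k1} the only changes are the global factor $2^{m}$ and the insertion of $(2z+1)(z-4)$ into the functional $L$; these modify the prefactors and shift the Pochhammer blocks to those in the second line of the conjecture, but leave the factor-identification argument unchanged.

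I expect the main obstacle to be the unusual exponent $3i-2$ on $(z-1)$: its cubic growth means \eqref{e-Am-2k} is neither Hankel nor Toeplitz, so the shifted Chebyshev/Jacobi orthogonal polynomials attached to the weight $z^{m-1/2}(4-z)^{-1/2}$ on $[0,4]$ do not triangularize it directly, and one must locate enough independent vanishing relations to account for every factor in the product. A promising structural handle is the second alternant in $w:=z^{-1}(z-1)^{3}$ uncovered by \eqref{e-Uijy}: the symmetric constant term carries both $\prod_{i<j}(z_i-z_j)$ and $\prod_{i<j}(w_i-w_j)$, so by an Andr\'eief expansion \eqref{e-Am-2k} is exactly the moment matrix of the bilinear pairing $\langle u,v\rangle=\CT_x z^{m+k}(z-1)\,u\,v$ between polynomials $u$ in $w$ and $v$ in $z$. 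Constructing the biorthogonal families for this pairing and evaluating their norms in closed form would give the product directly; carrying out that evaluation, and checking that the norms telescope to the stated factorials and Pochhammer symbols, is the hard computational heart of the argument.
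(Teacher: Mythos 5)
Your reduction step is sound and is exactly what the paper does: you correctly verify that $M_{i,j}=\binom{m+i-1}{j-i}$ arises from Theorem \ref{t-fg} with $f(x)=(1+x)^m$, $g(x)=x(1+x)$, and that the column count matches, so the sum of minors equals \eqref{e-fg}, which the computation in Section 3 reduces to the determinants \eqref{e-Am-2k} and \eqref{e-Am-2k1}. Your structural observations are also correct: $L[z^N]=\binom{2N}{N}$, the arcsine-weight integral representation, and the identification of \eqref{e-Am-2k} as the bimoment matrix of the pairing $\langle u,v\rangle=\CT_x z^{m+k}(z-1)\,u(w)\,v(z)$ with $w=z^{-1}(z-1)^3$, which is precisely the content of \eqref{e-Uijy}. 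But you should be aware that the paper itself does not prove this statement --- it is stated as a conjecture, with the text explicitly saying the determinants ``might be easy for experts'' and stopping at \eqref{e-Am-2k} and \eqref{e-Am-2k1}. Your proposal reaches the same point as the paper and then offers only a program, not a proof.

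The genuine gap is that the determinant evaluation, which you yourself call ``the hard computational heart of the argument,'' is never carried out. For the identification-of-factors method you exhibit no vanishing relation among rows or columns at any specialization of $m$; note that the zeros of blocks like $(3m+4i-2)_{2i-2}$ occur at rational, generally non-integer values $m=(2-4i-t)/3$, so one must first normalize the entries into polynomials in $m$ (your central-binomial prefactor step, also unverified) and then find $m$-dependent linear combinations certifying each factor with the correct multiplicity --- in Krattenthaler's own applications this is the step that consumes most of the work. Likewise, the biorthogonal families for your pairing are not constructed and their norms not computed, so nothing telescopes to the stated product. It is telling that, as the paper's endnote $^2$ records, Krattenthaler's actual proof of this conjecture proceeds by an entirely different route (Stembridge's Pfaffian representation, not an evaluation of \eqref{e-Am-2k}/\eqref{e-Am-2k1}), which suggests the direct determinant evaluation you sketch is not routine. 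As it stands, your argument proves the equivalence of the conjecture with the evaluation of two explicit determinants --- which the paper already establishes --- and no more.
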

Here $(n)_k$ is the rising factorial $n(n+1)\cdots (n+k-1)$.

\section{By Jacobi's Change of Variable Formula\label{sec-Jacobi}}
We first complete the proof of Identity \ref{iden-simple} by transforming the constant term into known constant terms. Here, we mean
Macdonald's constant terms for root system of type BC, which is defined to be the constant term of the following:
\begin{multline}
M_n(x;a,b,c):=\prod_{1\le i\le n}(1-x_i)^a\left(1-\frac{1}{x_i}\right)^a
(1+x_i)^b\left(1+\frac{1}{x_i}\right)^b  \\
\prod_{1\le i<j\le n} \left[
\left(1-\frac{x_i}{x_j}\right)\left(1-\frac{x_j}{x_i}\right)(1-x_ix_j)\left(1-\frac{1}{x_ix_j}\right)
\right]^c. \label{e-Macdonald}
\end{multline}
This includes type D (set $a=b=0$), C (set $b=0$), B (set $a=b$) as special cases.
The constant term was evaluated by Macdonald \cite{Macdonald}.

\begin{proof}[Proof of Identity \ref{iden-simple}]
Denote by $LHS$ the left-hand side of Identity \ref{iden-simple}. Apply Theorem
\ref{t-P} with $P(x)=(1+x_1)^m\cdots (1+x_n)^m$.

i) If $n=2k$, then by \eqref{e-Qeven'} we have the following.
\begin{align*}
LHS&=\frac{1}{2^kk!}\CT_x \prod_{i=1}^k (1+x_i)^m(1+x_i^{-1})^m
\bar{B}_k(x)^2\prod_{i=1}^k x_i\\
&= \frac{1}{2^kk!} \CT_x  M_k(x; 0,m+1,1);
\end{align*}

ii) If $n=2k+1$ then similarly by \eqref{e-Qodd'} we have
\begin{align*}
LHS&=\frac{1}{(2)^k k!}\CT_x 2^m (-1)^k \prod_{i=1}^k (1+x_i)^m(1+x_i^{-1})^m
B_k(x)^2 \\
&=\frac{2^{m-k}}{ k!} \CT_x M_k(x; 1,m+1,1).
\end{align*}

The remaining part is routine. We omit the details.
\end{proof}

Before realizing Macdonald's constant term identity applies, we discovered a different approach. This leads to a new way, as far as I know, to evaluate Macdonald's constant term $M_n(x;a,b,c)$ for root system of type BC by using two well-known results. One result is Jacobi's change of
variable formula. See, e.g., \cite{xin-residue}.
\begin{thm}[Jacobi's Residue Formula] Let \(y=f(x)\in \mathbb{C}((x)) \) be a Laurent
series and let \(b\) be the integer such that \(f(x)/x^b\) is a
formal power series with nonzero constant term. Then for any formal
series \(G(y)\) such that the composition \(G(f(x))\) is a Laurent
series, we have \vspace{-2mm}
\begin{align}
 \CT_x G(f(x)) \frac{x}{f}\frac{\partial
f}{\partial x} =b \CT_y G(y). \label{e-Jacobi}
\end{align}
\end{thm}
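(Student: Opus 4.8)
The plan is to translate the statement into the language of formal residues, reduce to monomials by linearity, and then exploit the single fact that the residue of a derivative vanishes. This sidesteps any analytic contour-integral reasoning and keeps everything inside $\mathbb{C}((x))$.

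First I would record the dictionary between the constant-term and residue operators. Writing $\mathrm{Res}_x h := [x^{-1}]h$ for the coefficient of $x^{-1}$ in a Laurent series $h$, one has $\CT_x h = \mathrm{Res}_x(h/x)$. Applying this to the left-hand side of \eqref{e-Jacobi} and cancelling the factor $x$ gives
$$ \CT_x G(f(x))\,\frac{x}{f}\frac{\partial f}{\partial x} = \mathrm{Res}_x\!\left[ G(f(x))\,\frac{f'(x)}{f(x)} \right], $$
so it suffices to prove $\mathrm{Res}_x[G(f)\,f'/f] = b\,\CT_y G(y)$. Both sides are $\mathbb{C}$-linear in $G$, so I would reduce to the monomials $G(y)=y^k$, $k\in\mathbb{Z}$, and recover the general case by linearity and continuity (treated in the last paragraph).

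The monomial case splits in two. For $k\neq 0$, the integrand is an exact derivative,
$$ f(x)^k\,\frac{f'(x)}{f(x)} = f(x)^{k-1}f'(x) = \frac{1}{k}\frac{d}{dx}\bigl(f(x)^k\bigr), $$
and since $\mathrm{Res}_x h'=0$ for every Laurent series $h$ (the coefficient of $x^{-1}$ in $h'$ is always $0$), the left-hand side vanishes; meanwhile $\CT_y y^k=0$, so both sides agree. For $k=0$ I would write $f(x)=c\,x^b u(x)$ with $c\neq 0$ and $u(x)=1+\cdots$ a unit power series, whence the logarithmic derivative is
$$ \frac{f'(x)}{f(x)} = \frac{b}{x} + \frac{u'(x)}{u(x)}. $$
Here $u'/u$ is an honest power series and contributes no $x^{-1}$ term, so $\mathrm{Res}_x(f'/f)=b$, which matches $b\,\CT_y 1=b$. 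This establishes the identity on every monomial, hence on every finite Laurent combination.

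The step I expect to require the most care is the passage from finite combinations to an arbitrary formal series $G$, and this is exactly where the hypothesis that $G(f(x))$ be a Laurent series is used. Writing $G(y)=\sum_k c_k y^k$, the assumption guarantees that $\sum_k c_k f(x)^k$ converges in $\mathbb{C}((x))$; multiplication by the fixed Laurent series $f'/f$ is continuous and $\mathrm{Res}_x$ is a continuous linear functional, so the residue may be taken termwise, giving $\mathrm{Res}_x[G(f)\,f'/f]=\sum_k c_k\,\mathrm{Res}_x[f^{k-1}f'] = b\,c_0 = b\,\CT_y G(y)$. Making this interchange rigorous --- identifying the appropriate topology on Laurent series and checking that convergence of $G(f)$ propagates through multiplication by $f'/f$ and through the residue --- is the only genuinely delicate point; the monomial computations themselves are immediate.
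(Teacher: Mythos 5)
Your proof is correct, but there is nothing in the paper to compare it against: the paper does not prove this theorem at all, quoting it as a known result with the pointer ``See, e.g., \cite{xin-residue}'', where it is established in the more general setting of Malcev--Neumann series. What you give is the standard self-contained formal-series proof, and each ingredient checks out: the dictionary $\CT_x h=\mathrm{Res}_x(h/x)$ converts the left side to $\mathrm{Res}_x\left[G(f)f'/f\right]$; for $G(y)=y^k$ with $k\neq 0$ the integrand is $\frac{1}{k}(f^k)'$ and $\mathrm{Res}_x h'=0$ for every Laurent series $h$ (only the constant term of $h$ could produce an $x^{-1}$ in $h'$, and it differentiates to $0$); and for $k=0$ writing $f=cx^bu(x)$ with $u$ a unit power series gives $f'/f=b/x+u'/u$ with $u'/u$ a genuine power series, so the residue is $b$. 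On the one point you flag as delicate, the interchange is in fact routine once you spell out what the hypothesis buys: if $b\ge 1$, then $\mathrm{ord}(c_kf^k)=kb\to\infty$, so convergence of $\sum_k c_kf^k$ in $\mathbb{C}((x))$ is automatic, multiplication by the fixed Laurent series $f'/f$ shifts all orders by at least $\mathrm{ord}(f'/f)$, and only finitely many terms contribute to the $x^{-1}$ coefficient; if instead $b\le 0$, the composition $G(f)$ can only be a Laurent series when $G$ is a Laurent polynomial (for $b=0$ the orders $\mathrm{ord}(c_kf^k)$ do not grow, and for $b<0$ they tend to $-\infty$), so the limiting step is vacuous and finite linearity already finishes the argument, including the degenerate case $b=0$ where both sides vanish. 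It would strengthen your write-up to state this dichotomy explicitly rather than appeal to an unspecified topology. In terms of trade-offs, your argument buys a short verification at exactly the level of generality this paper uses in Section \ref{sec-Jacobi} (there $f=x(1+x)^{-2}$, so $b=1$), while the cited reference \cite{xin-residue} buys the stronger statement for Malcev--Neumann series, which is not needed here.
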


The other result is the well-known Morris constant term
identity \cite{morris82}.
\begin{thm}
[Morris Identity] For $k\in \PP,$ $b\in \NN$, $a\in \mathbb{C}$, we
have
\begin{align}
\CT_x \prod_{l=1}^n \left(1-{x_l}\right)^{a}
\left(1-\frac{1}{x_l}\right)^{\!\!b}\prod_{1\leq i\neq j\leq n}
  \left( 1-\frac{x_i}{x_j} \right)^{\!\!k}=\prod_{l=0}^{n-1}\frac{(a+b+kl)!(k(l+1))!}{(a+kl)!(b+kl)!k!}.
\end{align}
\end{thm}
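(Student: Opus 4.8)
The plan is to prove the Morris identity by a parameter-lowering recurrence, reducing it to Dyson's constant term identity as the ultimate base case. Write $M_n(a,b,k)$ for the left-hand side. First I would record the symmetry $M_n(a,b,k)=M_n(b,a,k)$, which follows instantly from the substitution $x_i\mapsto x_i^{-1}$ for all $i$: this swaps the factors $(1-x_i)^a$ and $(1-x_i^{-1})^b$ while fixing $\prod_{i\ne j}(1-x_i/x_j)^k$ and the constant-term operator. Because $b\in\NN$, the strategy is to lower $b$ by integer steps down to $b=0$ and then dispose of the residual dependence on the complex parameter $a$.

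The engine of the proof is an Aomoto-type recurrence. The basic tool is the vanishing relation $\CT_x [x_i\,\partial_{x_i}F]=0$, valid for any Laurent series $F$, since $x_i\partial_{x_i}$ multiplies the coefficient of each monomial by its $x_i$-exponent and so annihilates the constant term. Applying this with $F$ equal to the Morris integrand times suitable monomials, and using the SS-trick to symmetrize, produces a finite system of linear relations among constant terms of the form $\CT_x [(\text{symmetric insertion})\cdot(\text{integrand})]$. Solving this system isolates the single recurrence
$$M_n(a,b,k)=\left(\prod_{l=0}^{n-1}\frac{a+b+kl}{b+kl}\right)M_n(a,b-1,k),$$
which is exactly the ratio predicted by the conjectured product. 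Iterating it $b$ times reduces the evaluation to $M_n(a,0,k)$.

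It remains to evaluate $M_n(a,0,k)=\CT_x \prod_{l=1}^n(1-x_l)^a\prod_{i\ne j}(1-x_i/x_j)^k$. Since $\prod_{i\ne j}(1-x_i/x_j)^k$ is a fixed Laurent polynomial, extracting the constant term selects only finitely many coefficients $\binom{a}{j}$, so $M_n(a,0,k)$ is a polynomial in $a$. The same recurrence specialized to $b=0$ gives $M_n(a,0,k)=M_n(a-1,0,k)$, and a polynomial invariant under $a\mapsto a-1$ must be constant, so $M_n(a,0,k)=M_n(0,0,k)$. Finally $M_n(0,0,k)=\CT_x \prod_{i\ne j}(1-x_i/x_j)^k=(nk)!/(k!)^n$ is precisely Dyson's identity, and a short telescoping check confirms that the accumulated recurrence factors reproduce $\prod_{l=0}^{n-1}(a+b+kl)!\,(k(l+1))!/\big((a+kl)!\,(b+kl)!\,k!\big)$.

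The main obstacle is deriving the recurrence cleanly: one must select the right family of monomial insertions, verify that the resulting linear relations among the symmetrized constant terms are independent enough to pin down the factor $\prod_l (a+b+kl)/(b+kl)$, and handle the degenerate cases where exponents coincide. Should the linear algebra prove unwieldy, an alternative is to identify $M_n(a,b,k)$ with a circular (torus) Selberg integral and invoke Selberg's evaluation directly; but the recurrence route keeps the entire argument inside the constant-term framework used throughout this paper.
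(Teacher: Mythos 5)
A point of comparison first: the paper does not prove this theorem at all --- it imports the Morris identity from Morris's thesis \cite{morris82} as a known evaluation, used only as the endpoint of the reduction in Section 4. So your proposal is not an alternative to an in-paper argument; it is an attempt to supply a proof that the paper deliberately outsources. Your architecture is the classical and viable one (an Aomoto-style recurrence lowering $b$, the $a\leftrightarrow b$ symmetry, polynomiality in $a$, Dyson's identity as base case), and your bookkeeping is correct: the ratio $\prod_{l=0}^{n-1}(a+b+kl)/(b+kl)$ does match the conjectured product, and $M_n(0,0,k)=(nk)!/(k!)^n$ is exactly Dyson.

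As a proof, however, the proposal has a genuine hole precisely where its engine should be: the $b$-lowering recurrence is asserted, not derived. ``Apply $\CT_x x_i\partial_{x_i}F=0$ with suitable monomial insertions and solve the resulting linear system'' is a description of a strategy, and you flag it yourself as the main obstacle; but choosing the insertions, symmetrizing, and showing the relations close up to yield exactly $\prod_{l}(a+b+kl)/(b+kl)$ is the entire technical content of an Aomoto-style proof of Morris's identity --- Aomoto needed the auxiliary family with insertions $t_1\cdots t_r$ even in the integral setting, and the constant-term analogue is comparably delicate. Two smaller slips also need patching. First, the symmetry $M_n(a,b,k)=M_n(b,a,k)$ via $x_i\mapsto x_i^{-1}$ is only legitimate for $a\in\NN$: for complex $a$ the image $(1-x_i^{-1})^a$ is not a Laurent series in the ambient ring, so the symmetry must be established at integer $a$ and then extended by the polynomiality-in-$a$ argument you invoke elsewhere. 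Second, ``the same recurrence specialized to $b=0$'' literally reads $M_n(a,0,k)=\prod_{l}\frac{a+kl}{kl}\,M_n(a,-1,k)$, which has a zero denominator at $l=0$ and an undefined right-hand side; what you actually need is the $a$-lowering companion recurrence obtained from the symmetry at integer parameters (ratio $\prod_{l}(a+b+kl)/(a+kl)$, which at $b=0$ is indeed $1$), followed by your polynomial-constancy argument. Both slips are repairable, but until the recurrence itself is actually derived, the argument remains a plan rather than a proof.
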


We  make the change of variable by \(y_i=x_i(1+x_i)^{-2}\)
with \(b=1\). Then $x_i$ has to be chosen
to be $x_i=\frac{1-2y_i-\sqrt{1-4y_i}}{2y_i}$, which is the well-known
Catalan generating function (minus $1$). Direct calculation shows
that
$$ \frac{x_i}{y_i}\frac{\partial
y_i}{\partial x_i}=\frac{1-x_i}{1+x_i}=\sqrt{1-4y_i}.  $$ Thus
Jacobi's formula gives
\begin{align}
\CT_{x_i} G(y_i(x_i)) =\CT_{x_i}
G(y_i(x_i))\frac1{\sqrt{1-4y_i}}\frac{1-x_i}{1+x_i} = \CT_{y_i}
\frac{G(y_i)}{\sqrt{1-4y_i}}. \label{e-crucial-jacobi}
\end{align}
We also need the following crucial observation.
\begin{align}
\frac{(y_i-y_j)^2}{y_i^2y_j^2}=
(1-\frac{x_i}{x_j})(1-\frac{x_j}{x_i})(1-x_ix_j)(1-\frac{1}{x_ix_j}).
\label{e-crucial-observation}
\end{align}

Now we can compute as follows.
\begin{align*}
 \CT_x M_n(x;a,b,c)&=\CT_x \prod_{1\le i\le n}\left(\frac{1-4y_i}{y_i}\right)^{a}
\left(\frac{1}{y_i}\right)^b   \prod_{1\le i<j\le n} \left[
\frac{(y_i-y_j)^2}{y_i^2y_j^2} \right]^c \\
&=\CT_y \prod_{1\le i\le n}\left({1-4y_i}\right)^{a-1/2}
\left({y_i}\right)^{-a-b-(n-1)c}   \prod_{1\le i\ne j\le n}
\left(1-\frac{y_i}{y_j}\right)^c
\end{align*}
Now make another change of variables by letting $y_i=t_i/4$. We have
\begin{align*}
 \CT_x M_n(x;a,b,c)&=4^{n(a+b+(n-1)c)}\CT_t \prod_{1\le i\le n}\left({1-t_i}\right)^{a-1/2}
\left({t_i}\right)^{-a-b-(n-1)c}   \prod_{1\le i\ne j\le n}
\left(1-\frac{t_i}{t_j}\right)^c
\end{align*}
This corresponds to the Morris identity for parameters $-\frac12-b-(n-1)c,
a+b+(n-1)c, c. $


\medskip
After the paper was published, I received two comments from Professor Christian Krattenhaler. I would like to include them here.

$^1$ The determinants on page 8 are special cases of Theorems 30
and 31 in ``Advanced Determinant Calculus" by Krattenhaler, S\'{e}minaire Lotharingien Combin. 42 (1999), Article B42q.

$^2$ Conjecture 10 is already stated in Section 5, in an equivalent form, of Zeilbergers's paper \cite{zeilberger-dollars}! Moreover, the conjecture has been
proven by Krattenhaler using Stembridge's Pfaffian representation
in ``Determinant identities and a generalization of the number of totally symmetric self-complementary plane partitions", (Electron. J. Combin. 4(1) (1997), \#R27.).

\end{document}